\newcommand{\rem}[1]{}
\renewcommand\div{\on{div}}
\newcommand\on{\operatorname}
\newcommand\Emb{\on{Emb}}
\newcommand\Rot{\on{Rot}}
\newcommand\Diff{\on{Diff}}
\newcommand\ev{\on{ev}}
\newcommand\Ham{\on{Ham}}
\newcommand\curl{\on{curl}}
\renewcommand\prod{\on{prod}}
\newcommand\Gr{\on{Gr}}
\newcommand\OO{\mathcal{O}}
\newcommand\ZZ{\mathbb Z}
\newcommand\RR{\mathbb R}
\newcommand\w{\textsf{w}}
\newcommand\vv{\operatorname{v}}
\newenvironment{proof}[1][Proof]{\noindent\textbf{#1.} }{\ \rule{0.5em}{0.5em}}
\newcommand{\fint}{-\!\!\!\!\!\!\int}
\def\XXint#1#2#3{{\setbox0=\hbox{$#1{#2#3}{\int}$ }
\vcenter{\hbox{$#2#3$ }}\kern-.5\wd0}}
\begin{document}

\newtheorem{theorem}{Theorem}[section]
\newtheorem{definition}[theorem]{Definition}
\newtheorem{lemma}[theorem]{Lemma}
\newtheorem{remark}[theorem]{Remark}
\newtheorem{proposition}[theorem]{Proposition}
\newtheorem{corollary}[theorem]{Corollary}
\newtheorem{example}[theorem]{Example}

\definecolor{burgund}{RGB}{153,0,51}      


\title{Manifolds of vortex loops as coadjoint orbits}
\author{Ioana Ciuclea$^1$, Cornelia Vizman$^2$}
\addtocounter{footnote}{1}

\footnotetext{West University of Timi\c soara, 
300223-Timi\c soara, Romania.
\texttt{ioana.ciuclea@e-uvt.ro}
\addtocounter{footnote}{1}}

\footnotetext{West University of Timi\c soara, 
300223-Timi\c soara, Romania.
\texttt{cornelia.vizman@e-uvt.ro}
\addtocounter{footnote}{1}}

\date{ }
\maketitle
\makeatother



\begin{abstract}
We study a class of coadjoint orbits of the area preserving diffeomorphism group of
the plane consisting of vortex loops, namely closed curves in the plane decorated with
one-forms (vorticity densities) allowed to have zeros.
\end{abstract}

\noindent \textbf{Keywords:} \textcolor{black}{coadjoint orbit, vortex loop, Hamiltonian group}

\section{Introduction}

Euler's equations for an ideal fluid, $\dot \vv + \nabla_{\vv}\vv = -\nabla p$, $\div\vv = 0$, with $\vv$ denoting the fluid velocity and $p$ the pressure, are geodesic equations on the volume preserving diffeomorphism group \cite{Arnold, EM}. In particular, the vorticity $\curl \vv$ is confined to a coadjoint orbit of this group. In this article we focus on singular vorticities with 1-dimensional support in the two dimensional case. More precisely, we study a class of coadjoint orbits of the area preserving diffeomorphism group of the plane consisting of vortex loops, with the vorticity density allowed to have zeros. Here the area preserving diffeomorphism group is identified with $\Ham_c(\RR^2)$, the group of compactly supported Hamiltonian diffeomorphisms of $\RR^2$. Its Lie algebra can be seen as the space of compactly supported smooth functions on $\RR^2$, whose dual is the space of distributions. 

Vortex loops are the 2D version of vortex sheets (see \cite{batchelor, goldin}). These are pairs $(C, \beta_C)$ with $C$ denoting a closed plane curve and $\beta_C$ a volume form (its vorticity density):
\[\langle(C, \beta_C), h \rangle = \int_Ch\beta_C, \quad h \in  C^\infty_c(\RR^2). \]
We denote by $\OO_{\w}^a$ the set of vortex loops with the same enclosed area $a$ and the same total vorticity $\w = \int_C\beta_C$. This data forms a complete set of invariants of the coadjoint $\Ham_c(\RR^2)$ action on vortex loops. These coadjoint orbits are described in \cite{GBV} (see also \cite{CiucleaVizman}). 

\medskip

In this paper, we expand the study of vortex loops to include the case of a vorticity density with a finite number of nondegenerate zeros. These are pairs $(C, \beta_C)$ with $C$ an oriented closed plane curve and $\beta_C$ a one-form on $C$ with finitely many nondegenerate zeros (i.e. a Morse form). 
Choosing one zero as ``starting point'' allows us to regard them as a $k$-tuple $(x_1, \dots, x_k)$
and to introduce $k$ partial vorticities along the loop, $\w_i = \int_{x_i}^{x_{i+1}}\beta_C$,
regarded as a $k$-tuple $\bar\w=(\w_1, \dots, \w_k)$. We denote by $\OO_{\bar\w}^a$ the set of vortex loops with the same enclosed area $a$ and the same $\bar\w$, as this data forms a complete set of invariants for the coadjoint $\Ham_c(\RR^2)$ action on vortex loops. 

\medskip

Let $(S^1,\beta)$ be a model for the vortex loop $(C,\beta_C)$. It defines, together with the canonical area form $\omega$, a closed 2-form on the manifold of embeddings enclosing a fixed area $\Emb_a(S^1, \RR^2)$, 
\[\Omega_f(u_f, v_f) = \int_{S^1}\omega(u_f, v_f)\beta,  \]
which fails to be symplectic if $\beta$ is a volume form. In this case, the coadjoint orbit $\OO_{\bar\w}^a$ is identified with the quotient manifold $\Emb_a(S^1, \RR^2)/\Rot(S^1)$, which can be obtained via symplectic reduction in the Marsden-Weinstein ideal fluid dual pair \cite{MW}. 

\medskip

In this paper, we show that $\Omega$ is symplectic when 
$\beta$ is a Morse one-form (Proposition \ref{Omega_symp_ex}). When the partial vorticities have no rotational symmetry, the coadjoint orbit $\OO_{\bar\w}^a$ is identified with $\Emb_a(S^1, \RR^2)$ and $\Omega$ is its Kirillov-Kostant-Souriau (KKS) symplectic form (Remark \ref{orbit2}). When there is a (smallest) step $\ell$ rotational symmetry of $\bar\w$, the coadjoint orbit $\OO_{\bar\w}^a$ is identified with the quotient manifold $\Emb_a(S^1, \RR^2)/\mathbb Z_{k/\ell}$ (Corollary \ref{orbit1}), with $\ZZ_{k/\ell}$ denoting the (discrete) cyclic group of degree $k/\ell$ and the KKS symplectic form descending from $\Omega$. 

\medskip

The vortex loops considered here resemble the pointed vortex loops in \cite{CiucleaVizman}. These are singular vorticities that combine features of point vortices and vortex loops. More precisely, they are triples $\left(C, \beta_C, (x_1, \dots, x_k)\right)$ with $C$ a closed plane curve, $\beta_C$ a nowhere zero vorticity density and $(x_1, \dots, x_k)$ point vortices attached to the loop: 

\[\left\langle\left( C, \beta_C, (x_1, \dots, x_k)), h \right)\right\rangle = \int_Ch\beta_C + \sum_{i = 1}^k\Gamma_ih(x_i), \quad h \in C_c^\infty(\RR^2)\]
where $\Gamma_i$ denotes the vorticity of the point $x_i$. The point vortices introduce partial vorticities along the loop, namely $\w_i = \int_{x_i}^{x_{i+1}}\beta_C$. We denote by $\OO_{\bar\w}^a$ the set of pointed vortex loops with the same enclosed area $a$ and the same partial vorticities. The model for a pointed vortex loops is a triple $\left(S^1, \beta, (t_1, \dots, t_k)\right)$, with $(t_1, \dots, t_k)$ denoting points on the circle corresponding to the point vortices on the loop. The closed 2-form

\[\Omega^\Gamma = \Omega + \sum_{i = 1}^k\Gamma_i\ev_{t_i}^*\omega\]
where $\ev_{t_i}: \Emb_a(S^1, \RR^2) \to \RR^2$ is the evaluation map at the points $t_i$ of the circle is a symplectic form on $\Emb_a(S^1, \RR^2)$. If the vorticities $(\Gamma_i)$ and $(\w_i)$ have no joint rotational symmetry, the coadjoint orbit is identified with $\Emb_a(S^1, \RR^2)$ and $\Omega^\Gamma$ is its KKS symplectic form. If the vorticities $(\Gamma_i)$ and $(\w_i)$ have joint rotational symmetry of step $\ell$, the coadjoint orbit $\OO_{\bar\w}^a$ is identified with the quotient $\Emb_a(S^1, \RR^2)/\ZZ_{k/\ell}$, with KKS symplectic form descending from $\Omega^\Gamma$. 

We remark thus that the manifold $\OO_{\bar\w}^a$ of vortex loops enclosing a fixed area and featuring partial vorticites can be realized as two different coadjoint orbits of $\Ham_c(\RR^2)$, via identification with the quotient manifold $\Emb_a(S^1, \RR^2)/\ZZ_{k/\ell}$. If the partial vorticities arise from the existence of point vortices attached to the loop, the KKS symplectic form descends from $\Omega^\Gamma$. If the partial voticities arise because the vorticity density has isolated zeros, the KKS symplectic form descents from $\Omega$. In other words, the quotient manifold $\Emb_a(S^1, \RR^2)/\ZZ_{k/\ell}$ parameterizes two different coadjoint orbits of $\Ham_c(\RR^2)$ consisting of different types of vortex loops, with two different KKS symplectic forms. 

\medskip

\section{Manifolds of vortex loops}\label{vl}

We consider the pair $(S^1, \beta)$, where $S^1$ denotes the oriented unit circle and $\beta \in \Omega^1(S^1)$ is a one-form with nondegenerate zeros (i.e a Morse form). We denote by $Z(\beta)$ its zero set. As a consequence of the Morse lemma for a compact space, the zeros of $\beta$ are isolated and finitely many. By the Poincar\'e-Hopf lemma we deduce that the number of zeros of $\beta$ must be even.  Identifying $S^1$ with the quotient $\RR/2\pi\mathbb Z$ we obtain a natural ordering of the zeros: we may regard them as a $k$-tuple $\bar t = (t_1, \dots, t_k)$, with $0 \leq t_1 \leq \dots \leq t_k < 2\pi$. We make the following notations:
\begin{equation*}
\w := \int_{S^1}\beta, \quad \w_i := \int_{t_i}^{t_{i+1}}\beta,  
\end{equation*}
with the convention $t_{k+1} = t_1$. The $\w_i$'s form a $k$-tuple denoted by $\bar\w := (\w_1, \dots, \w_k)$, which inherits an ordering from $\bar t$. We remark that, because of the nondegeneracy of the zeros, the signs of the $\w_i$'s alternate. 

We consider vortex loops based on this model, namely pairs $(C, \beta_C)$ consisting of an oriented closed plane curve $C$, with no self intersections, endowed with a Morse vorticity density $\beta_C \in \Omega^1(C)$ (i.e. a one-form with nondegenerate zeros). We denote its zero set by $Z(\beta_C)$. Choosing a ``starting point'' $x_1 \in Z(\beta_C)$ allows us to denote the zeros of $\beta_C$ as a $k$-tuple $\bar x := (x_1, \dots, x_k)$ ordered with respect to the orientation of the curve. We denote the total vorticity by $\w := \int_C\beta_C$ and the partial vorticities induced by the zeros by $\w_i := \int_{x_i}^{x_{i+1}}\beta_C$, with the convention that $x_{k + 1} = x_1$. 

Let $f \in \Emb(S^1, \RR^2)$ be an embedding and $\beta \in \Omega^1(S^1)$ a Morse form with $\bar t = (t_1, \dots, t_k)$ and $\bar\w=(\w_1, \dots, \w_k)$. The diffeomorphism $f: S^1 \to f(S^1)$ induces a Morse form $f_*\beta$ on its image, with the same number of zeros as $\beta$ and the same set of $\w_i$'s. The map $t_i \mapsto f(t_i)$ induces an ordering of the zeros of $f_*\beta$, which in turn induces an ordering of the $\w_i$'s. 

\begin{remark}\label{ell}
Let $\ell \in \{1, \dots, k\}$ be the smallest natural number that satisifies 
\begin{equation*}\label{symmetry}
\w_i = \w_{i+\ell}, \quad \forall i \in\{1, \dots, k\}.
\end{equation*}
By convention, we consider $\w_{i+k} = \w_i$. We remark that $\ell$ must be a divisor of $k$. Because the numbers $\w_i$ alternate in sign, we remark that $\ell$ must also be an even number. If there exists such an $\ell < k$, we say that $\bar\w$ has rotational symmetry of step $\ell$. 
\end{remark}

\begin{definition}
If there exists $\ell \in \{1, \dots, k\}$ as described in Remark $\ref{ell}$, we say $\beta$  has rotational symmetry of step $\ell$. 
\end{definition}

Let $\Diff(S^1,\beta) = \{\gamma \in \Diff(S^1) : \gamma ^* \beta = \beta\}$ denote the stabilizer of $\beta \in \Omega^1(S^1)$ in $\Diff(S^1)$. Lemma \ref{isotropy} below shows that it is isomorphic to the cyclic group $\mathbb Z_{k/\ell}$.

\begin{remark}\label{def_j}
If $\gamma \in \Diff(S^1,\beta)$, then $\gamma(Z(\beta)) = Z(\beta)$, where $Z(\beta)$ denotes the zero set of $\beta$. Moreover, $\gamma$ cannot change the ordering of the zeros, so there exists $j$ such that $\gamma(t_i) = t_{i + j}$, for all $i = 1, \dots, k$ and $j$ is uniquely defined modulo $k$. Restricting to an interval between two consecutive zeros, we get a diffeomorphism $\gamma: [t_i, t_{i+1}] \to [t_{i+j}, t_{i+j+1}]$ which preserves $\beta$ and $\w_i = \int_{t_i}^{t_{i+1}}\beta = \int_{t_{i + j}}^{t_{i + j+1}}\beta = \w_{i+j}$, for all $i$. Thus $j$ must be a multiple of $\ell$. 
\end{remark}

\begin{lemma}\label{isotropy}
The group homomorphism 
\begin{equation}\label{phi}
g: \Diff(S^1,\beta) \to \mathbb Z_k, \quad g(\gamma) = j\mod k, 
\end{equation}
where $j$ is a natural number such that $\gamma(t_i) = t_{i+j}$, as described in Remark $\ref{def_j}$, is injective. Its image is the subgroup of $\ZZ_k$ generated by the element $\ell \mod k$ and is isomorphic to the cyclic group of degree $k/\ell$, namely $\mathbb Z_{k/\ell}$. 
\end{lemma}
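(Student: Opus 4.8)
I would analyse $g$ in three stages: that it is a well-defined homomorphism, that it is injective, and that its image equals the subgroup $\langle\ell\bmod k\rangle$ of $\ZZ_k$; the isomorphism with $\ZZ_{k/\ell}$ then follows at once, since $\ell\mid k$ makes that subgroup cyclic of order $k/\ell$. The first point is immediate from Remark \ref{def_j}: the exponent $j$ with $\gamma(t_i)=t_{i+j}$ is determined modulo $k$, and composition adds exponents, so $g(\gamma_1\circ\gamma_2)=g(\gamma_1)+g(\gamma_2)$. For injectivity I would identify $\ker g$: if $g(\gamma)=0$ then $\gamma(t_i)=t_i$ for all $i$, so by Remark \ref{def_j} $\gamma$ carries each closed arc $[t_i,t_{i+1}]$ onto itself, fixing its endpoints. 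Since $\beta$ vanishes only at the endpoints of that arc, a primitive $F_i$ of $\beta$ on $[t_i,t_{i+1}]$ with $F_i(t_i)=0$ is strictly monotone, hence injective; integrating $\gamma^*\beta=\beta$ from $t_i$ yields $F_i\circ\gamma=F_i$ on the arc, so $\gamma=\id$ there, and therefore on all of $S^1$.

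For the image, the containment in $\langle\ell\bmod k\rangle$ is exactly the last assertion of Remark \ref{def_j}. For the reverse inclusion I would construct a $\gamma_0\in\Diff(S^1,\beta)$ with $\gamma_0(t_i)=t_{i+\ell}$, so that $g(\gamma_0)=\ell\bmod k$. Since $\bar\w$ has period $\ell$, the arcs $[t_i,t_{i+1}]$ and $[t_{i+\ell},t_{i+\ell+1}]$ carry equal $\beta$-integrals $\w_i=\w_{i+\ell}$; hence the monotone primitives $F_i$ and $F_{i+\ell}$, each normalised to vanish at the left endpoint, have the same image interval, and I set $\gamma_0:=F_{i+\ell}^{-1}\circ F_i$ on each $[t_i,t_{i+1}]$. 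These pieces send $t_i$ to $t_{i+\ell}$, so they agree at the common endpoints and patch to a continuous map $\gamma_0\colon S^1\to S^1$ satisfying $\gamma_0^*\beta=\beta$ on every arc.

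The delicate step --- and the main obstacle --- is to show that $\gamma_0$ is smooth, with nonvanishing derivative, at each zero $t_i$; this is precisely where the Morse (nondegeneracy) hypothesis on $\beta$ is used. Near $t_i$ the Morse lemma supplies a coordinate $r$ centred at $t_i$ in which a local primitive of $\beta$ equals $\ep_i r^2$ with $\ep_i\in\{\pm1\}$, and likewise a coordinate $\rho$ near $t_{i+\ell}$ with local primitive $\ep_{i+\ell}\rho^2$; since $\on{sign}\w_i=\on{sign}\w_{i+\ell}$ gives $\ep_i=\ep_{i+\ell}$, the relation $F_{i+\ell}\circ\gamma_0=F_i$ reduces, on a two-sided neighbourhood of $t_i$, to $(\rho\circ\gamma_0)^2=r^2$, that is $\rho\circ\gamma_0=r$. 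Hence $\gamma_0$ is the identity in these coordinates near every zero, so it is a diffeomorphism of $S^1$ preserving $\beta$, with $g(\gamma_0)=\ell\bmod k$. Combined with injectivity, this identifies $\Diff(S^1,\beta)$ with $\langle\ell\bmod k\rangle\cong\ZZ_{k/\ell}$.
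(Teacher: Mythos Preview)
Your proof is correct and follows essentially the same strategy as the paper's: injectivity via the monotone primitive on each arc, and surjectivity onto $\langle\ell\bmod k\rangle$ by gluing arc-wise $\beta$-preserving diffeomorphisms built from those primitives. The paper's proof simply asserts that the arc-wise pieces ``define altogether $\gamma\in\Diff(S^1,\beta)$'' without checking smoothness at the zeros; your Morse-lemma argument that $\gamma_0$ reads as $\rho=r$ in local Morse coordinates near each $t_i$ fills in exactly that detail.
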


\begin{proof}
Let $\gamma \in \text{Ker}(g)$, i.e. $\gamma(t_i) = t_i$, for all $i = 1, \dots, k$. On an interval between two consecutive zeros the form $\beta$ is a volume form, thus we have $\gamma: [t_i, t_{i+1}] \to [\gamma(t_i), \gamma(t_{i+1})]$ a volume preserving diffeomorphism. We compute $\int_{t_i}^t\beta = \int_{t_i}^t\gamma^*\beta =\int_{\gamma(t_i)}^{\gamma(t)}\beta = \int_{t_i}^{\gamma(t)}\beta$, which shows $\gamma(t) = t$ for all $t \in (t_i, t_{i+1})$. Therefore $\gamma$ is the identity on $S^1$ and $g$ is injective. 

We know that $j$ is a multiple of $\ell$, so it is enough to find $\gamma \in \Diff(S^1,\beta)$ with $g(\gamma) = \ell \mod k$. By the step $\ell$ rotational symmetry of $\bar\w = (\w_1, \dots, \w_k)$, we get that $\w_i = \w_{i+\ell}$ implies the existence of a $\beta$ preserving diffeomorphism $\gamma: [t_i, t_{i+1}] \to [t_{i+\ell}, t_{i + \ell + 1}]$. These define alltogether $\gamma \in \Diff(S^1,\beta)$. 
\end{proof}

\begin{proposition}\label{id}
Let $(S^1, \beta)$ denote the oriented circle together with a Morse form $\beta$ as described above. We denote by $\mathcal O_{\bar\w}$ the set of vortex loops with the same $\bar\w$ as $\beta$. If $\beta$ has rotational symmetry of step $\ell$, the cyclic group $\mathbb Z_{k/\ell}$ acts on $\Emb(S^1, \RR^2)$ via the isomorphism \eqref{phi} and the map
\begin{equation}\label{bij2}
\psi: \Emb(S^1, \RR^2)/\mathbb Z_{k/\ell} \to \mathcal O_{\bar\w}, \quad \psi([f]) = (f(S^1), f_*\beta)
\end{equation}
is a bijection.
\end{proposition}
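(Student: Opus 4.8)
The plan is to verify that $\psi$ is well defined, injective and surjective; the first two are formal, and essentially all of the content sits in surjectivity, which I will reduce to a normal‑form statement for Morse one‑forms on $S^1$. For well‑definedness, if $\gamma\in\Diff(S^1,\beta)$ then also $\gamma_*\beta=(\gamma^{-1})^*\beta=\beta$, so $(f\circ\gamma)(S^1)=f(S^1)$ and $(f\circ\gamma)_*\beta=f_*(\gamma_*\beta)=f_*\beta$; hence $\psi$ is constant on the $\ZZ_{k/\ell}$‑orbits (the action being the one coming from \eqref{phi}) and descends to the quotient. For injectivity, suppose $\psi([f])=\psi([f'])$; then $f(S^1)=f'(S^1)=:C$ and $f_*\beta=f'_*\beta$, so $\gamma:=(f')^{-1}\circ f\in\Diff(S^1)$ satisfies $f'_*(\gamma_*\beta)=f_*\beta=f'_*\beta$, whence $\gamma_*\beta=\beta$, i.e.\ $\gamma\in\Diff(S^1,\beta)$. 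By Lemma \ref{isotropy} this group is exactly the copy of $\ZZ_{k/\ell}$ acting on $\Emb(S^1,\RR^2)$, so $[f]=[f']$.

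For surjectivity, let $(C,\beta_C)\in\mathcal O_{\bar\w}$. By definition of $\mathcal O_{\bar\w}$ there is a choice of starting point $x_1\in Z(\beta_C)$ for which $\int_{x_i}^{x_{i+1}}\beta_C=\w_i$ for all $i$ (in particular $\beta_C$ has exactly $k$ zeros). Pick any orientation‑preserving diffeomorphism $\phi\colon S^1\to C$ with $\phi(t_i)=x_i$ for $i=1,\dots,k$ (send the arc between two consecutive zeros diffeomorphically onto the corresponding arc of $C$). Then $\beta':=\phi^*\beta_C\in\Omega^1(S^1)$ is a Morse form with $Z(\beta')=\{t_1,\dots,t_k\}$ and $\int_{t_i}^{t_{i+1}}\beta'=\w_i=\int_{t_i}^{t_{i+1}}\beta$. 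It now suffices to produce $\gamma\in\Diff(S^1)$ with $\gamma(t_i)=t_i$ for all $i$ and $\gamma_*\beta=\beta'$: then $f:=\phi\circ\gamma$ is an embedding with $f(S^1)=C$ and $f_*\beta=\phi_*(\gamma_*\beta)=\phi_*\beta'=\beta_C$, so $\psi([f])=(C,\beta_C)$.

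To construct $\gamma$ I would run a Moser deformation. Put $\beta_s:=(1-s)\beta+s\beta'$ for $s\in[0,1]$: on each arc $(t_i,t_{i+1})$ both $\beta$ and $\beta'$ have the sign of $\w_i\neq 0$, so $\beta_s$ has that same constant sign there and vanishes at $t_i$ with a nonzero density‑derivative (those of $\beta$ and $\beta'$ at $t_i$ have the same nonzero sign), hence each $\beta_s$ is Morse with zero set $\{t_i\}$ and $\int_{t_i}^{t_{i+1}}\beta_s=\w_i$. Since $\oint_{S^1}(\beta-\beta')=0$ and $\int_{t_i}^{t_{i+1}}(\beta-\beta')=0$ for each $i$, we may write $\beta-\beta'=dH$ with $H\in C^\infty(S^1)$ vanishing at every $t_i$; as $dH$ also vanishes at each $t_i$, $H$ vanishes there to order $\ge 2$. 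Let $X_s$ be the time‑dependent vector field determined by $\iota_{X_s}\beta_s=H$. Away from the $t_i$ this is well defined since $\beta_s$ is nonvanishing; near $t_i$, writing $\beta_s$ locally as a density with a simple zero and $H$ as a function with a double zero, the quotient is smooth and vanishes at $t_i$, so $X_s$ extends to a smooth (jointly in $s$) vector field on $S^1$ vanishing at every $t_i$. Its flow $\gamma_s$ then fixes each $t_i$ and satisfies $\frac{d}{ds}(\gamma_s^*\beta_s)=\gamma_s^*\big(\mathcal L_{X_s}\beta_s+\dot\beta_s\big)=\gamma_s^*\big(d(\iota_{X_s}\beta_s)+(\beta'-\beta)\big)=\gamma_s^*(dH-dH)=0$, using $d\beta_s=0$ on the $1$‑manifold $S^1$. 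Hence $\gamma_1^*\beta'=\beta$, and $\gamma:=\gamma_1$ (orientation‑preserving, being isotopic to $\id$) is the diffeomorphism we need.

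The main obstacle is exactly this last step — showing that two Morse one‑forms on $S^1$ with the same zeros and the same partial vorticities are carried into one another by a diffeomorphism fixing those zeros. The combinatorial bookkeeping (ordering of the zeros, the identification of $\Diff(S^1,\beta)$ with $\ZZ_{k/\ell}$, matching the $\w_i$) is routine; the delicate point is the smoothness of the normalizing diffeomorphism (equivalently, of the Moser vector field) across the zeros of $\beta$, and this is precisely where the equality of the partial vorticities $\w_i$ enters: it forces the primitive $H$ of $\beta-\beta'$ to vanish to second order at each $t_i$, cancelling the simple zeros of $\beta_s$ so that $X_s=H/\beta_s$ is smooth. (The interval‑by‑interval normalization used in the proof of Lemma \ref{isotropy} gives an alternative route, but the same smoothness issue at the zeros has to be addressed, e.g.\ via a local Morse normal form for $\beta$.)
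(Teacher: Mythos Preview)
Your proof is correct. Well-definedness and injectivity match the paper's argument essentially verbatim. For surjectivity, however, you take a genuinely different route: the paper constructs the embedding $f$ directly, arc by arc, by setting $b_i(t)=\int_{t_i}^t\beta$, $a_i(x)=\int_{x_i}^x\beta_C$, and $f_i=a_i^{-1}\circ b_i$ on each $[t_i,t_{i+1}]$, then gluing the $f_i$. This is more elementary and hands-on, but (as you yourself anticipate in your closing remark) the smoothness of the glued map across the zeros is not addressed explicitly there; it ultimately rests on a local Morse normal form for $\beta$ and $\beta_C$. Your Moser deformation is less explicit but handles this smoothness issue cleanly and intrinsically: the observation that the equal partial vorticities force the primitive $H$ to vanish to second order at each $t_i$, so that $X_s=H/\beta_s$ extends smoothly, is exactly what makes the argument go through and is a nice way to see where the hypothesis enters. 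Either approach works; yours trades explicitness for a more robust treatment of regularity at the zeros.
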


\begin{proof}
Let $f_1, f_2\in\Emb(S^1, \RR^2)$ such that $\psi(f_1) = \psi(f_2)$. This means that $f_2 = f_1 \circ \gamma$, where $\gamma$ is a diffeomorphism of $S^1$ preserving the one-form $\beta$ which means $\gamma \in \Diff(S^1,\beta)$. By Lemma \ref{isotropy}, this isotropy group is isomorphic to $\mathbb Z_{k/\ell}$ hence factoring out this group ensures the injectivity of $\psi$. 

To prove the surjectivity, let $(C, \beta_C)$ be an element in $\mathcal O_{\bar\w}$ and $(x_1, \dots, x_k)$ an ordering of the zeros corresponding to the ordering of $\bar\w$. For every $\w_i > 0$, we construct diffeomorphisms $b_i: [t_i, t_{i+1}] \to [0, \w_i]$, $b_i(t) = \int_{t_i}^t\beta$ and $a_i: [x_i, x_{i+1}] \to [0, \w_i]$, $a_i(x) = \int_{x_i}^x\beta_C$. Using these maps, we construct a diffeomorphism $f_i: [t_i, t_{i+1}] \to [x_i, x_{i+1}]$, $f_i(t) = (a_i^{-1}\circ b_i)(t)$. We note that $db_i = \beta\big|_{[t_i, t_{i+1}]}$ and $da_i = \beta_C\big|_{[x_i, x_{i+1}]}$, which leads to $(f_i)_*\beta\big|_{[t_i, t_{i+1}]} = \beta_C\big|_{[x_i, x_{i+1}]}$. The construction for a $\w_i < 0$ is analogous, except for the codomain of $b_i$ and $a_i$ being $[\w_i, 0]$. The diffeomorphisms $f_i$, $i = 1, \dots, k$ glue together to define an embedding $f: S^1 \to \RR^2$, with $f(S^1) = C$ and $f_*\beta = \beta_C$. A different ordering of $(x_1, \dots, x_k)$ gives a different element in the class of $f$. We get $\psi([f]) = (f(S^1), f_*\beta)$. 

\end{proof}

\begin{corollary}
    If $\beta$ has no rotational symmetry, we have $\ell = k$ and the identification in \eqref{bij2} becomes 
    \begin{equation*}
        \psi: \Emb(S^1, \RR^2) \to \mathcal O_{\bar\w}, \quad \psi(f) = (f(S^1), f_*\beta).
    \end{equation*}
\end{corollary}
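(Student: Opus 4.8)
The plan is to apply Proposition \ref{id} in the degenerate case $\ell = k$, so almost nothing new needs to be done. First I would unwind the meaning of ``no rotational symmetry'': by the convention $\w_{i+k} = \w_i$ fixed in Remark \ref{ell}, the step $\ell = k$ always satisfies $\w_i = \w_{i+\ell}$ for every $i$, so the smallest $\ell \in \{1,\dots,k\}$ with this property is always well defined; the hypothesis that $\beta$ has no (proper) rotational symmetry is precisely the statement that no $\ell < k$ works, hence $\ell = k$.

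Next I would observe that $\ZZ_{k/\ell} = \ZZ_1$ is the trivial group, so the action of $\ZZ_{k/\ell}$ on $\Emb(S^1,\RR^2)$ coming from \eqref{phi} is trivial and the quotient $\Emb(S^1,\RR^2)/\ZZ_{k/\ell}$ is canonically identified with $\Emb(S^1,\RR^2)$ itself, with $[f]$ corresponding to $f$. Substituting this identification into \eqref{bij2} turns $\psi$ into the stated map $\psi(f) = (f(S^1), f_*\beta)$, which is a bijection by Proposition \ref{id}.

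I do not expect any real obstacle here: this corollary is nothing more than the special case $\ell = k$ of the preceding proposition. The only point I would take care to state explicitly is that the convention $\w_{i+k} = \w_i$ is what guarantees $\ell$ is defined at all, so that ``no rotational symmetry'' unambiguously means $\ell = k$ rather than ``$\ell$ undefined''.
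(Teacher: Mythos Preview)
Your proposal is correct and matches the paper's approach: the paper states this as an immediate corollary with no proof, since it is nothing more than the special case $\ell = k$ of Proposition \ref{id}, exactly as you describe. Your explicit unpacking of the convention $\w_{i+k}=\w_i$ to justify that $\ell$ is always defined is a nice clarification the paper leaves implicit.
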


\begin{remark}[Decorated nonlinear Grassmannians framework \cite{HV_decorated}]

The nonlinear Grassmannian of type $S^1$ in $\RR^2$, which we denote by $\Gr_{S^1}(\RR^2)$, is the space of all submanifolds of $\RR^2$ which are diffeomorphic to $S^1$. It is the base space of the $\Diff(S^1)$ principal bundle
\begin{equation}\label{grass}
\pi: \Emb(S^1, \RR^2) \to \Gr_{S^1}(\RR^2), \quad \pi(f) = f(S^1).
\end{equation}

The space of vortex loops can be seen as a decorated Grassmannian, defined as an associated bundle to \eqref{grass}, namely
\begin{equation*}
\Gr_{S^1}^{\text{deco}}(\RR^2) = \Emb(S^1, \RR^2) \times_{\Diff(S^1)} \Omega^1(S^1).
\end{equation*}

We denote by $\Diff(S^1) \cdot \beta$ the orbit of $\beta \in \Diff(S^1)$ under the action of $\Diff(S^1)$. The space of vortex loops $\mathcal O_{\bar\w}$ can be seen as the decorated Grassmannian of type $(S^1, \beta)$, defined as an associated bundle to \eqref{grass}, namely
\begin{equation*}
\Gr_{(S^1, \beta)}^{\text{deco}}(\RR^2) = \Emb(S^1, \RR^2) \times_{\Diff(S^1)} \Diff(S^1)\cdot\beta.
\end{equation*}
Moreover, the map 
$
\Emb(S^1, \RR^2)     \rightarrow 	\Gr_{(S^1, \beta)}^{\text{deco}} (\RR^2)
$

is a principal bundle with structure group $\Diff(S^1,\beta)$. 

\end{remark}

\section{Symplectic structures for manifolds of embeddings}

Let $\mu \in \Omega^1(S^1)$ be a volume form on the circle and let $\omega$ be the canonical symplectic form in the plane. The hat pairing of $\omega$ and $\mu$ is defined as 
\begin{equation*}
\widehat{\omega\cdot\mu} = \fint_{S^1}\text{ev}^*\omega\wedge\text{pr}^*\mu,
\end{equation*}
where $\text{ev}: S^1 \times \Emb(S^1, \RR^2) \rightarrow \RR^2$ denotes the evaluation map and $\text{pr}: S^1 \times \Emb(S^1, \RR^2) \to S^1$ is the projection on the first factor. The two form $\widehat{\omega \cdot \mu}$ on $\Emb(S^1, \RR^2)$ is symplectic \cite{V}.

Let $\Emb_a(S^1, \RR^2)$ denote the manifold of embeddings enclosing a fixed area. The tangent space $T_f\Emb_a(S^1, \RR^2) = \left\{u_f : S^1 \rightarrow \RR^2 \mid \int_{S^1}f^*i_{u_f}\omega = 0\right\}$ can be identified with
\begin{equation}\label{tg_space}
T_f\Emb_a(S^1, \RR^2) \cong C_0^\infty(S^1) \times C^\infty(S^1)
\end{equation}
where $C_0^\infty(S^1) = \left\{\rho \in C^\infty(S^1) \mid \int_{S^1}\rho dt = 0\right\}$. Here we use the decomposition of vectors into tangent and normal components using the Euclidean metric in $\RR^2$.

\begin{lemma}
Let $\beta \in C^\infty(S^1)$ be a smooth function with nondegenerate zeros. The pairing $\langle \ , \ \rangle: C_0^\infty(S^1) \times C^\infty(S^1) \to \RR$ defined by
\begin{equation}\label{pairing}
\langle\rho, \lambda\rangle = \int_{S^1}\rho(t)\lambda(t)\beta(t)dt
\end{equation}
is nondegenerate.
\end{lemma}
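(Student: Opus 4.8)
Nondegeneracy of the pairing means that both induced linear maps $C_0^\infty(S^1)\to C^\infty(S^1)^*$ and $C^\infty(S^1)\to C_0^\infty(S^1)^*$ are injective, so the plan is to verify the two slots separately. Throughout I identify $\beta$ with the one-form $\beta(t)\,dt$, so the hypothesis says $\beta$ is a smooth function whose zero set $Z(\beta)$ consists of isolated points at which $\beta$ changes sign; since $S^1$ is compact, $Z(\beta)$ is finite, and (as in the Morse-form setting of Section \ref{vl}) nonempty. The one structural fact I would exploit repeatedly is that, because $Z(\beta)$ is finite, multiplication by $\beta$ annihilates no nonzero continuous function: if $\mu\in C^\infty(S^1)$ and $\mu\beta\equiv 0$, then $\mu$ vanishes off the finite set $Z(\beta)$, hence $\mu\equiv 0$ by continuity.

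For the first slot, fix $\rho\in C_0^\infty(S^1)$ with $\rho\not\equiv 0$. By the fact above, $\rho\beta\not\equiv 0$, so there is a point $t_*$ and an open arc $I\ni t_*$ on which $\rho\beta$ is continuous, nonzero and of constant sign. Taking $\lambda\in C^\infty(S^1)$ to be a nonnegative bump function supported in $I$ with $\lambda(t_*)>0$ gives $\langle\rho,\lambda\rangle=\int_{S^1}\rho\lambda\beta\,dt\neq 0$, since the integrand is of constant sign and not identically zero. Note that no constraint is imposed on $\lambda$, which ranges over all of $C^\infty(S^1)$, so this direction is essentially just a choice of bump function.

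For the second slot, fix $\lambda\in C^\infty(S^1)$ with $\lambda\not\equiv 0$ and suppose, for contradiction, that $\langle\rho,\lambda\rangle=0$ for every $\rho\in C_0^\infty(S^1)$. Writing $h:=\lambda\beta\in C^\infty(S^1)$ and $\bar h:=\tfrac{1}{2\pi}\int_{S^1}h\,dt$, the function $\rho_0:=h-\bar h$ lies in $C_0^\infty(S^1)$, and $0=\langle\rho_0,\lambda\rangle=\int_{S^1}(h-\bar h)h\,dt=\int_{S^1}(h-\bar h)^2\,dt$, so $h\equiv\bar h$ is constant. But $h=\lambda\beta$ vanishes at any point of $Z(\beta)$, which is nonempty, so $h\equiv 0$, i.e. $\lambda\beta\equiv 0$; by the structural fact this forces $\lambda\equiv 0$, a contradiction. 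Hence $\lambda\mapsto\langle\,\cdot\,,\lambda\rangle$ is injective, and the pairing is nondegenerate.

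I expect the only real subtlety to be this second slot, where one must upgrade ``$L^2$-orthogonal to every mean-zero function'' to ``constant'' and then invoke the presence of the zeros of $\beta$ to exclude that constant; this is precisely the step that would fail if $\beta$ were nowhere vanishing (then $1/\beta\in C^\infty(S^1)$ would pair trivially with all of $C_0^\infty(S^1)$), which is why the hypothesis that $\beta$ genuinely has zeros is essential. Everything else is a routine bump-function argument.
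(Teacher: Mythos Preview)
Your proof is correct. The first slot is handled the same way as in the paper: a bump-function argument showing that $\rho\beta$ cannot be $L^1$-orthogonal to all smooth $\lambda$ unless $\rho\beta\equiv 0$, whence $\rho\equiv 0$ by your structural fact.

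The second slot is where you diverge. The paper shows $\lambda\beta$ is constant by testing against ``double bump'' zero-mean functions supported near two arbitrary points $t',t''$ and shrinking supports to conclude $\lambda(t')\beta(t')=\lambda(t'')\beta(t'')$; then, as you do, it uses $Z(\beta)\neq\emptyset$ to force the constant to be zero. Your argument bypasses the double-bump construction entirely: taking the single test function $\rho_0=\lambda\beta-\overline{\lambda\beta}\in C_0^\infty(S^1)$ and observing $\int(\lambda\beta-\overline{\lambda\beta})\,\lambda\beta\,dt=\int(\lambda\beta-\overline{\lambda\beta})^2\,dt$ gives constancy in one stroke. This is slicker and exploits the $L^2$ structure, whereas the paper's version is purely pointwise and distribution-theoretic in flavor. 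Both reach the same conclusion, and both correctly identify that $Z(\beta)\neq\emptyset$ is exactly what rules out the degenerate direction $\lambda=1/\beta$ that would arise for a nowhere-vanishing $\beta$.
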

\begin{proof}
Let $\rho\in C_0^\infty(S^1)$ such that 
\begin{equation}\label{rho}
\langle\rho,\lambda\rangle = 0, \quad \forall \lambda \in C^\infty(S^1).
\end{equation} 
We denote by $Z(\beta)$ the zero set of $\beta$. Assume by contradiction there exists $t \in S^1\setminus Z(\beta)$ such that $\rho(t) \neq 0$. Then there exists a neighborhood of $t$, say $U \subset S^1$, such that $\rho(t) \neq 0$, $\forall t \in U$. Without loss of generality, we can assume $\rho$ and $\beta$ are positive on $U$. Let $\lambda \in C^\infty(S^1)$ be a bump function with support in $U$, positive on $U$. Then 
\begin{equation*}
\langle\rho, \lambda\rangle = \int_{S^1}\rho(t)\lambda(t)\beta(t)dt > 0
\end{equation*}
which contradicts \eqref{rho}, therefore $\rho$ vanishes on $S^1\setminus Z(\beta)$. Since $\rho$ is a continuous function, this means $\rho = 0$. 

Furthermore, let $\lambda \in C^\infty(S^1)$ such that 
\begin{equation}\label{lambda}
\langle\rho, \lambda\rangle = 0, \quad \forall \rho \in C_0^\infty(S^1). 
\end{equation}
Consider two arbitrary points $t^\prime, t^{\prime\prime} \in S^1$ and let $\rho \in C_0^\infty(S^1)$ be a ``double bump'' zero integral function, supported in the union of small neighborhoods of $t^\prime$ and $t^{\prime\prime}$. By shrinking the support of $\rho$ we can make $\int_{S^1}\rho(t)\lambda(t)\beta(t)dt$ arbitrarily close to $\lambda(t^\prime)\beta(t^\prime) - \lambda(t^{\prime\prime})\beta(t^{\prime\prime})$. Using \eqref{lambda}, we get that $\lambda(t)\beta(t)$ is constant on $S^1$. Since $\beta(t) = 0$, for certain $t \in S^1$, we deduce that this constant must be 0. But $\beta(t) \neq 0$ for $t \in S^1 \setminus Z(\beta)$, therefore $\lambda(t) = 0$, $\forall t \in S^1\setminus Z(\beta)$. Since $\lambda$ is a continuous function, we get $\lambda = 0$. 
\end{proof}

Let $\beta \in \Omega^1(S^1)$ be an one-form with nondegenerate zeros. Using the procedure in \cite{V}, we define a two-form $\Omega \in \Omega^2(\Emb_a(S^1, \RR^2))$ by 
\begin{equation}\label{Omega}
\Omega_f(u_f, v_f) = (\widehat{\omega \cdot \beta})_f(u_f, v_f) = \int_{S^1}\omega(u_f, v_f)\beta. 
\end{equation}

\begin{proposition}\label{Omega_symp_ex}
The form $\Omega$ in \eqref{Omega} is symplectic and exact. 
\end{proposition}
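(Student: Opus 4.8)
The plan is to handle exactness first, as it is straightforward, and then derive non-degeneracy from it (closedness being automatic once we exhibit a primitive). For exactness, I would mimic the construction in \cite{V}: on $\Emb_a(S^1,\RR^2)$ define the one-form $\Theta_f(u_f) = \int_{S^1} \theta(u_f)\,\beta$, where $\theta$ is a primitive of $\omega$ on $\RR^2$ (e.g. $\theta = \tfrac12(x\,dy - y\,dx)$), and $\theta(u_f)$ denotes the function $t \mapsto \theta_{f(t)}(u_f(t))$. A direct computation of $\dd\Theta$ using the Cartan formula for forms on the infinite-dimensional manifold $\Emb_a(S^1,\RR^2)$ — exactly as in the proof that $\widehat{\omega\cdot\mu}$ is exact in \cite{V} — yields $\dd\Theta = \Omega$, because $\beta$ is a fixed (parameter-space) form and $\dd\theta = \omega$. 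The only subtlety is that $\Theta$ must be well-defined on the area-constrained submanifold, but since it is defined by restriction of a one-form on all of $\Emb(S^1,\RR^2)$, this is immediate; likewise $\Omega$ is closed as the restriction of the closed form $\widehat{\omega\cdot\beta}$. Thus $\Omega = \dd\Theta$ on $\Emb_a(S^1,\RR^2)$, giving both closedness and exactness.

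For non-degeneracy, I would fix $f \in \Emb_a(S^1,\RR^2)$ and a nonzero tangent vector $u_f$, and produce $v_f$ with $\Omega_f(u_f,v_f)\neq 0$. Using the tangent-space identification \eqref{tg_space}, write $u_f \leftrightarrow (\rho, \lambda) \in C_0^\infty(S^1)\times C^\infty(S^1)$, decomposing $u_f = \rho\, \partial_t f + \lambda\, n$ into tangential and unit-normal parts (with $\partial_t f$ the velocity of the parametrization and $n$ the Euclidean unit normal). A pointwise computation of $\omega(u_f, v_f)$ in this frame — using $\omega(\partial_t f, n) = |\partial_t f|$ (up to a positive speed factor that I would absorb or track) and $\omega(\partial_t f, \partial_t f) = \omega(n,n) = 0$ — shows that $\Omega_f(u_f, v_f)$ becomes, after reparametrizing so that $|\partial_t f| \equiv 1$ or carrying the Jacobian into $\beta$, a combination of the pairing \eqref{pairing} of the components of $u_f$ against the components of $v_f$. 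Concretely it reduces to an expression of the form $\langle \rho, \lambda'\rangle - \langle \rho', \lambda\rangle$ (the two components of $v_f$ being $(\rho',\lambda')$), which is precisely the bilinear pairing whose non-degeneracy on $C_0^\infty(S^1)\times C^\infty(S^1)$ is established by the preceding Lemma. Hence if $u_f \neq 0$, at least one of $\rho,\lambda$ is nonzero, and the Lemma supplies a $v_f$ on which the pairing does not vanish.

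The main obstacle I anticipate is the bookkeeping in the non-degeneracy step: one must carefully verify that, after the tangent/normal decomposition and the change of variable that turns the arc-length element into the coefficient of $\beta$, the resulting bilinear form on $C_0^\infty(S^1)\times C^\infty(S^1)$ is exactly the one covered by the Lemma (in particular that the $C_0^\infty$ constraint lands on the right slot, coming from the area constraint $\int_{S^1} f^* i_{u_f}\omega = 0$). A secondary point worth checking is that the ``swap'' structure — tangential component of $u_f$ pairs with normal component of $v_f$ and vice versa — does not accidentally produce a degenerate direction; this is ruled out precisely because the Lemma's pairing is non-degenerate on \emph{each} factor separately. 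Once this identification is made cleanly, non-degeneracy is immediate, and combined with the exact primitive $\Theta$ we conclude that $\Omega$ is a symplectic (indeed exact symplectic) form on $\Emb_a(S^1,\RR^2)$.
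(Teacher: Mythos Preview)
Your proposal is correct and follows essentially the same route as the paper. For exactness, the paper invokes the derivation property of the hat pairing to write $\Omega=\widehat{d\nu\cdot\beta}=d(\widehat{\nu\cdot\beta})$, which is exactly your primitive $\Theta$ expressed in hat-calculus notation; for non-degeneracy, the paper likewise passes to the identification \eqref{tg_space} and rewrites $\Omega((\rho_1,\lambda_1),(\rho_2,\lambda_2))=\langle\rho_1,\lambda_2\rangle-\langle\rho_2,\lambda_1\rangle$ in terms of the Lemma's pairing, precisely the ``swap'' structure you describe. The bookkeeping worries you flag (arc-length factor, which slot carries the $C_0^\infty$ constraint) are legitimate but are simply asserted away in the paper's presentation; your caution there is well placed rather than a divergence in method.
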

\begin{proof}
Both $\omega$ and $\beta$ are closed, therefore $\Omega$ is also a closed form. The exterior derivative is a derivation for the hat pairing \cite{V}. This fact, together with $\omega = d\nu$ and $d\beta = 0$, leads us to 
\begin{equation*}
\Omega = \widehat{d\nu\cdot\beta} = d(\widehat{\nu\cdot\beta})
\end{equation*}
with $\widehat{\nu\cdot\beta}$ a one-form on $\Emb_a(S^1, \RR^2)$. 

Using the identification \eqref{tg_space}, we rewrite $\Omega$ as 
\begin{equation*}
\Omega((\rho_1, \lambda_1), (\rho_2, \lambda_2)) = \int_{S^1}\rho_1\lambda_2\beta - \rho_2\lambda_1\beta = \int_{S^1}\omega((\rho_1, \lambda_1), (\rho_2, \lambda_2))\beta
\end{equation*}
with $(\rho_1, \lambda_1), (\rho_2, \lambda_2) \in C_0^\infty(S^1) \times C^\infty(S^1)$ and remark that
\begin{equation*}
\Omega((\rho_1, \lambda_1), (\rho_2, \lambda_2))  = \langle\rho_1, \lambda_2\rangle - \langle\rho_2, \lambda_1\rangle
\end{equation*}
where $\langle, \rangle$ is the nondegenerate pairing \eqref{pairing}. 

\end{proof}

\section{Coadjoint orbits of the Hamiltonian group of $\RR^2$}

Euler's equations in the plane, $\dot\vv + \nabla_{\vv}\vv = -\nabla p$, $\text{div }\vv = 0$, for $\vv$ the fluid velocity and $p$ the pressure, are the geodesic equations on the Lie group of area preserving diffeomorphisms endowed with the right invariant $L^2$ metric \cite{Arnold}. The velocity $\vv$ is an element of the Lie algebra of this group, while the vorticity, $\text{curl } \vv$, is confined to a coadjoint orbit in the dual Lie algebra. Such orbits were classified in \cite{kirilov, IKM}.  Beside the smooth (regular) vorticities, non-smooth (singular) vorticities have also been considered, as these, too, are confined to coadjoint orbits (see \cite{K}). In this section, we describe coadjoint orbits consisting of vortex loops, i.e. singular vorticities with one-dimensional support. To do so, we will use the following proposition from mathematical folklore (a proof can be found in \cite{HV_nonlinear}):

\begin{proposition}\label{folklore}
Let $(M, \Omega)$ be a symplectic manifold and $G$ a Lie group, with Lie algebra denoted by $\mathfrak g$. Suppose the action of $G$ on $(M, \Omega)$ is transitive with injective equivariant momentum map $J: M \to \mathfrak g^*$. Then $J$ is one-to-one onto a coadjoint orbit of $G$. Moreover, it pulls back the Kirillov-Kostant-Souriau symplectic form $\omega_{\text{KKS}}$ on the coadjoint orbit to the symplectic form $\Omega$, i.e. $J^*\omega_{\text{KKS}} = \Omega$. 
\end{proposition}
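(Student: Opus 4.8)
The plan is to use transitivity of the $G$-action to reduce the whole statement to a short computation with fundamental vector fields, together with the two defining properties of $J$: equivariance, $J(g\cdot m)=\Ad^*_{g^{-1}}J(m)$, and the momentum map relation $i_{\xi_M}\Omega=d\langle J,\xi\rangle$ for all $\xi\in\g$, where $\xi_M$ denotes the fundamental vector field on $M$ generated by $\xi$.

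First I would identify the image of $J$. Fixing a base point $m_0\in M$, transitivity gives $M=G\cdot m_0$, and equivariance then gives $J(M)=\Ad^*_G J(m_0)$, which is by definition a single coadjoint orbit $\OO\subset\g^*$. Since $J$ is injective by hypothesis, $J\colon M\to\OO$ is a bijection. To upgrade it to a diffeomorphism onto the orbit, I would note that equivariance and transitivity make $J$ intertwine the identifications $M\cong G/G_{m_0}$ and $\OO\cong G/G_{J(m_0)}$ induced by the inclusion of stabilizers $G_{m_0}\subseteq G_{J(m_0)}$; injectivity of $J$ forces this inclusion to be an equality, so $J$ is the induced isomorphism of homogeneous spaces. (In the infinite-dimensional applications of this paper one checks this directly for the action of $\Ham_c(\RR^2)$.)

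Next I would prove $J^*\omega_{\text{KKS}}=\Omega$ by evaluating both sides on pairs of fundamental vector fields, which by transitivity span $T_mM$ at every $m\in M$. Differentiating the curve $t\mapsto J(\exp(t\xi)\cdot m)$ and using equivariance gives $T_mJ(\xi_M(m))=-\ad^*_\xi J(m)$, i.e. exactly the fundamental vector field of the coadjoint action at $J(m)$; hence $(J^*\omega_{\text{KKS}})_m(\xi_M(m),\eta_M(m))$ is, by the very definition of the KKS form, $\pm\langle J(m),[\xi,\eta]\rangle$. On the other side, the momentum map relation gives $\Omega_m(\xi_M(m),\eta_M(m))=\big(d\langle J,\xi\rangle\big)_m(\eta_M(m))=\frac{d}{dt}\big|_{t=0}\langle J(\exp(t\eta)\cdot m),\xi\rangle$, and applying equivariance once more turns this into $\frac{d}{dt}\big|_{t=0}\langle J(m),\Ad_{\exp(-t\eta)}\xi\rangle=-\langle J(m),[\eta,\xi]\rangle=\langle J(m),[\xi,\eta]\rangle$. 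With the sign conventions for $\omega_{\text{KKS}}$, the coadjoint action, and the momentum map chosen consistently, the two expressions coincide, so the two-forms agree on a spanning set of each tangent space and therefore everywhere. As a consistency check, nondegeneracy of $\Omega$ together with $J$ being a diffeomorphism onto $\OO$ re-proves that $\omega_{\text{KKS}}$ is symplectic.

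The main obstacle is not the computation — which is pure bookkeeping of signs — but the regularity input: one must know that a smooth, injective, equivariant momentum map for a transitive action really is a diffeomorphism onto the coadjoint orbit (in particular that the orbit carries a manifold structure and that $T_mJ$ is an isomorphism at every point), and that ``transitive'' may be used infinitesimally, $T_mM=\{\xi_M(m):\xi\in\g\}$. Under the standard hypotheses that accompany a folklore statement of this kind these are granted, and the proof is exactly the two computations above.
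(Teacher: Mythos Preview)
The paper does not give its own proof of this proposition: it explicitly labels it ``from mathematical folklore'' and defers to \cite{HV_nonlinear} for a proof. There is therefore nothing in the paper to compare your argument against.

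That said, your argument is the standard one and is correct. Equivariance plus transitivity forces $J(M)$ to be a single coadjoint orbit and identifies $J$ with the map of homogeneous spaces induced by the inclusion $G_{m_0}\subseteq G_{J(m_0)}$; injectivity of $J$ turns this into an equality, hence $J$ is a $G$-equivariant diffeomorphism onto $\OO$. Your computation of $J^*\omega_{\text{KKS}}$ on pairs of fundamental vector fields via the momentum map relation and equivariance is exactly the usual verification, and your caveat about the infinitesimal form of transitivity and about the orbit carrying a manifold structure is appropriate, especially since the paper applies the result in an infinite-dimensional Fr\'echet setting where these points are not automatic.
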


Let $\omega = dx\wedge dy$ denote the canonical area form on $\RR^2$. This is a symplectic form which is also exact, for there exists $\nu = \frac{1}{2}(xdy - ydx)$ such that $\omega = d\nu$. We denote by $G$ the group of compactly supported area preserving diffeomorphisms. Its Lie algebra, which we denote by $\mathfrak g$, consists of compactly supported divergence free vector fields. In this setting, the group $G$ coincides with $\Ham_c(\RR^2)$, the group of compactly supported Hamiltonian diffeomorphisms of the plane and thus $\frak g$ is the Lie algebra of compactly supported Hamiltonian vector fields. This Lie algebra can be identified with $C_c^\infty(\RR^2)$, whose dual, denoted by $\mathfrak g^*$, is the space of distributions.

Recall that, by Proposition \ref{Omega_symp_ex}, $\Emb_a(S^1, \RR^2)$ is a symplectic manifold with symplectic form given by \eqref{Omega}.
The natural $G$ action on the symplectic manifold $(\Emb_a(S^1, \RR^2), \Omega)$, given by $\varphi \cdot f = \varphi \circ f$, is transitive (see for instance \cite{HV_weighted}). 

The hamiltonian $h \in C_c^\infty(\RR^2)$ and the one-form $\beta \in \Omega^1(S^1)$ induce a smooth function on $\Emb_a(S^1, \RR^2)$ via the hat pairing $\widehat{h \cdot \beta} : f \mapsto \int_{S^1}f^*h\beta$. Using the hat calculus in \cite{V}, we compute its diffe
ial: 

\begin{equation*}
d(\widehat{h \cdot \beta}) =  \widehat{(dh)\cdot\beta} + \widehat{h\cdot d\beta} = \widehat{\iota_{X_h}\omega\cdot\beta} = \iota_{\hat{X}_h}\Omega, 
\end{equation*}
where the vector field $\hat{X_h}(f) = X_h \circ f$ is the infinitesimal generator of $X_h \in \mathfrak g$. Therefore, the action of $G$ on $\Emb_a(S^1, \RR^2)$ is Hamiltonian with equivariant momentum map given by

\begin{equation}\label{momentum_map}
J: \Emb_a(S^1, \RR^2) \to \mathfrak g^*, \quad \langle J(f), X_h\rangle = \int_{S^1}(h \circ f)\beta.
\end{equation}
We will see how the injectivity of this momentum map depends on the rotational symmetry of $\beta$. 

The following Lemma for finite-dimensional manifolds also holds for Fr\'echet manifolds: 

\begin{lemma}\label{quotient}
    Let $(E, \Omega)$ be a symplectic Fr\'echet manifold. Let $G$ be an infinite dimensional Lie group and $H$ a discrete Lie group, both acting on $E$ such that the two actions commute. Suppose the symplectic form $\Omega$ is H invariant and the G action is Hamiltonian, with momentum map $J$. Then, the quotient $E/H$ is a symplectic manifold with symplectic form descending from $\Omega$ and the $G$ action on $E/H$ is Hamiltonian, with momentum map descending from $J$. 
\end{lemma}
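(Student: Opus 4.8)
The plan is to verify the four assertions of Lemma \ref{quotient} in turn, using that $H$ is discrete (so that the quotient map $q\colon E \to E/H$ is a local diffeomorphism) and that the two group actions commute (so that the $G$-action descends). First I would establish that $E/H$ is a smooth Fr\'echet manifold: since $H$ is discrete and acts on $E$, one needs the action to be free and properly discontinuous for the quotient to be a manifold; in the situations of interest ($E = \Emb_a(S^1,\RR^2)$, $H = \ZZ_{k/\ell}$ acting through \eqref{phi}) the action is free because a nontrivial $\gamma \in \Diff(S^1,\beta)$ has no fixed embeddings, and proper discontinuity follows from $H$ being finite. In general one simply assumes this is part of what ``discrete Lie group acting on $E$'' entails, so that $q$ is a covering map and in particular a local diffeomorphism; I would state this explicitly at the start.

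Next, since $\Omega$ is $H$-invariant and $q$ is a local diffeomorphism, there is a unique two-form $\bar\Omega$ on $E/H$ with $q^*\bar\Omega = \Omega$: locally one inverts $q$ and transports $\Omega$, and $H$-invariance guarantees the local pieces agree on overlaps. Then $\bar\Omega$ is closed because $q^*(d\bar\Omega) = d(q^*\bar\Omega) = d\Omega = 0$ and $q^*$ is injective on forms (as $q$ is a surjective submersion, indeed a local diffeo); and $\bar\Omega$ is weakly nondegenerate because nondegeneracy is a pointwise condition and $T_{[e]}q$ is an isomorphism carrying $\Omega_e$ to $\bar\Omega_{[e]}$. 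So $(E/H,\bar\Omega)$ is symplectic.

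For the descended $G$-action: because the $G$- and $H$-actions commute, $\varphi\cdot(h\cdot e) = h\cdot(\varphi\cdot e)$, so $\varphi$ maps $H$-orbits to $H$-orbits and thus descends to $\bar\varphi\colon E/H \to E/H$, giving a $G$-action with $q\circ\varphi = \bar\varphi\circ q$. Each $\bar\varphi$ is a symplectomorphism of $\bar\Omega$: pulling back, $q^*\bar\varphi^*\bar\Omega = \varphi^*q^*\bar\Omega = \varphi^*\Omega = \Omega = q^*\bar\Omega$, and injectivity of $q^*$ finishes it. For the momentum map I would show $J\colon E \to \g^*$ is $H$-invariant and hence descends to $\bar J\colon E/H \to \g^*$ with $\bar J\circ q = J$. $H$-invariance of $J$ is the key compatibility: for $X\in\g$ and the commuting actions one has $J(h\cdot e)(X) = J(e)(X)$ because the $H$-action is symplectic and commutes with the flow of the infinitesimal generator $\hat X$ — concretely, $d(\langle J,X\rangle) = \iota_{\hat X}\Omega$, the vector field $\hat X$ is $H$-invariant (as the actions commute), $\Omega$ is $H$-invariant, so $\langle J,X\rangle$ is $H$-invariant up to a constant, and one checks the constant vanishes (in the concrete case $J$ is given by the explicit formula \eqref{momentum_map}, which manifestly only depends on the unparametrized image and so is literally constant on $\Diff(S^1,\beta)$-orbits). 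Then $d\langle\bar J,X\rangle = \iota_{\bar{\hat X}}\bar\Omega$ follows by pulling back along $q$ and using injectivity of $q^*$ again, and $\mathrm{Ad}^*$-equivariance of $\bar J$ is inherited from that of $J$ pointwise.

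The main obstacle is not any single computation — each step is a short diagram chase — but rather the care needed in the infinite-dimensional (Fr\'echet) setting: one must know that $E/H$ is genuinely a smooth Fr\'echet manifold and that $q$ is a local diffeomorphism, so that ``pull back along $q$ and use injectivity of $q^*$'' is legitimate. For $H$ discrete (indeed finite, in the applications) this is standard, so I would isolate it as the one nontrivial input and otherwise let the diagram chases run. A secondary point worth a sentence is why the constant obstructing $H$-invariance of $\langle J,X\rangle$ vanishes; here I would appeal to the explicit form of $J$ in the cases at hand rather than argue in full generality, or alternatively note that $\g$ being perfect (or the $G$-action having no invariant covectors of the relevant type) forces the cocycle to be trivial.
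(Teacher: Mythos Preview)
The paper does not actually prove Lemma \ref{quotient}: it is stated without proof, prefaced only by the remark that ``the following Lemma for finite-dimensional manifolds also holds for Fr\'echet manifolds.'' So there is no paper proof to compare against; your outline supplies exactly the standard argument the paper elides.

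Your outline is correct. One small point worth tightening: the $H$-invariance of $J$ is, as you note, not automatic in general (the obstruction is a cocycle $H\to\g^*$), and your justification in the concrete case --- that $J(f)$ ``manifestly only depends on the unparametrized image'' --- is not quite accurate. The formula $\langle J(f),X_h\rangle=\int_{S^1}(h\circ f)\beta$ depends on the parametrization through $\beta$; what makes it $\Diff(S^1,\beta)$-invariant is precisely $\gamma^*\beta=\beta$, via the change of variables $\int_{S^1}(h\circ f)\beta=\int_{S^1}\gamma^*\big((h\circ f)\beta\big)=\int_{S^1}(h\circ f\circ\gamma)\,\gamma^*\beta=\int_{S^1}(h\circ f\circ\gamma)\beta$. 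This is exactly how the paper computes $\hat\gamma^*\widehat{\omega\cdot\beta}=\widehat{\omega\cdot\gamma^*\beta}=\widehat{\omega\cdot\beta}$ for $\Omega$, and the same hat-calculus identity handles $J$. With that adjustment, your sketch is complete.
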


The discrete group $\mathbb Z_{k/\ell}$ acts on $\Emb_a(S^1, \RR^2)$ via the isomorphism $\mathbb Z_{k/\ell} \cong \Diff(S^1,\beta)$ described in Lemma \ref{isotropy}. This action induces a diffeomorphism $\hat\gamma(f) = f \circ \gamma$, where $f\in \Emb_a(S^1, \RR^2)$ and $\gamma \in \Diff(S^1,\beta)$. Using the hat calculus in \cite{V}, we obtain 
\begin{equation*}
\hat \gamma^*\widehat{\omega \cdot \beta} = \widehat{\omega \cdot \gamma^*\beta} = \widehat{\omega \cdot \beta},
\end{equation*}
therefore the symplectic form $\Omega$ is preserved by the action. We remark that the actions of $\Diff(S^1)$ and $G$ commute, therefore, by Lemma \ref{quotient}, the symplectic form $\Omega$ descends to a symplectic form $\bar\Omega$ on the quotient $\Emb_a(S^1, \RR^2)/\mathbb Z_{k/\ell}$. Moreover, the action of $G$ on $\Emb_a(S^1, \RR^2)/\ZZ_{k/\ell}$ is Hamiltonian, with equivariant momentum map descending from $J$, given by 
\begin{equation}\label{momentum_map_quotient}
\bar J: \Emb_a(S^1, \RR^2)/\mathbb Z_{k/\ell} \to \mathfrak g^*, \quad \langle \bar J([f]), X_h\rangle = \int_{S^1}(h \circ f)\beta. 
\end{equation}

\begin{lemma}\label{inj_mommap}
The momentum map \eqref{momentum_map_quotient} is injective.
\end{lemma}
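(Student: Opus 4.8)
The plan is to show that $\bar J$ separates points. Suppose $\bar J([f_1])=\bar J([f_2])$ for $f_1,f_2\in\Emb_a(S^1,\RR^2)$; by \eqref{momentum_map_quotient} this says precisely that the distributions $D_i\colon h\mapsto\int_{S^1}(h\circ f_i)\beta$, $i=1,2$, coincide in $\g^*$. Since $\ZZ_{k/\ell}$ acts on $\Emb_a(S^1,\RR^2)$ through the isomorphism $\ZZ_{k/\ell}\cong\Diff(S^1,\beta)$ of \eqref{phi} and Lemma \ref{isotropy}, it is enough to produce $\gamma\in\Diff(S^1,\beta)$ with $f_2=f_1\circ\gamma$; I will first recover the image curve $f_i(S^1)$ from $D_i$, and then recover such a $\gamma$.

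The main step is the first one. Write $C_i=f_i(S^1)$. The support of $D_i$ is closed and obviously contained in $C_i$. Conversely, for $t\in S^1\setminus Z(\beta)$ pick a small disk $V\subset\RR^2$ around $f_i(t)$ whose preimage $f_i^{-1}(V)$ is a single arc on which $\beta$ does not vanish (possible because $f_i$ is an embedding and $\beta$ is nonzero near $t$); a nonnegative bump function supported in $V$ and positive at $f_i(t)$ then pairs with $D_i$ to give an integral of a fixed sign, hence nonzero. Thus $f_i(S^1\setminus Z(\beta))\subseteq\supp D_i$, and since $Z(\beta)$ is finite (Section \ref{vl}) this set is dense in $C_i$, so $\supp D_i=C_i$. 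Hence $D_1=D_2$ forces $C_1=C_2=:C$.

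Now $f_1,f_2\colon S^1\to C$ are diffeomorphisms, so $\gamma:=f_1^{-1}\circ f_2\in\Diff(S^1)$ and $f_2=f_1\circ\gamma$. This $\gamma$ is orientation preserving: with $\nu$ as in $\omega=d\nu$, we have $a=\int_{S^1}f_2^*\nu=\int_{S^1}\gamma^*(f_1^*\nu)=(\deg\gamma)\int_{S^1}f_1^*\nu=(\deg\gamma)\,a$, and $a\ne0$ since the curve is embedded. Every $\psi\in C^\infty(S^1)$ is of the form $h\circ f_1$ for some $h\in C_c^\infty(\RR^2)$, because smooth functions on the embedded circle $C$ extend to $\RR^2$; so $D_1=D_2$ becomes $\int_{S^1}\psi\,\beta=\int_{S^1}(\psi\circ\gamma)\,\beta$ for all $\psi\in C^\infty(S^1)$. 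As $\gamma$ preserves orientation, the change of variables $\int_{S^1}(\psi\circ\gamma)\,\beta=\int_{S^1}\psi\,(\gamma^{-1})^*\beta$ holds, so $\int_{S^1}\psi\,(\beta-(\gamma^{-1})^*\beta)=0$ for all $\psi$; this forces $(\gamma^{-1})^*\beta=\beta$, i.e.\ $\gamma^*\beta=\beta$. Therefore $\gamma\in\Diff(S^1,\beta)$, and by the reduction above $[f_1]=[f_2]$, so $\bar J$ is injective.

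I expect the support computation of the second paragraph to be the only delicate point: one must reconstruct $C$ from $D$ even though $\beta$ is permitted to vanish, and the care needed is exactly to keep each test function supported on an arc short enough to avoid the sign change of $\beta$ at a zero, so that no cancellation occurs. Once the curve is identified, the rest is the hat-calculus change of variables together with Lemma \ref{isotropy}.
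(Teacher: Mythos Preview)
Your proof is correct and follows essentially the same two-step strategy as the paper: first show that the two embeddings have the same image by testing against bump functions, then show the reparametrization $\gamma=f_1^{-1}\circ f_2$ lies in $\Diff(S^1,\beta)$. Your version is in fact slightly more careful than the paper's in two places: you phrase the first step as an explicit computation of the distributional support of $D_i$ (the paper argues the equality $C_1=C_2$ by contradiction with the same bump-function idea), and you justify that $\gamma$ is orientation preserving via the enclosed-area identity $a=(\deg\gamma)\,a$, whereas the paper simply writes ``assumed to be orientation preserving'' without argument.
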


\begin{proof}

Let $f_1, f_2 \in \Emb_a(S^1, \RR^2)$ such that $\bar J([f_1]) = \bar J([f_2])$, which means $J(f_1) = J(f_2)$. Thus, 

\begin{equation}\label{inj}
\int_{S^1}(h \circ f_1)\beta = \int_{S^1}(h \circ f_2)\beta, \quad \forall h \in C_c^\infty(\RR^2).
\end{equation}

Assume by contradiction that $C_1 = f_1(S^1)$ and $C_2 = f_2(S^1)$ do not coincide. There exists a point $x \in C_1 \setminus C_2$, which means there exists a whole neighborhood of $x$ contained in $C_1$ but not in $C_2$. Shrinking this neighborhood, we get a subset $U \subset C_1\setminus C_2$ such that $f_1(t_i) \notin U$, for all $t_i \in Z(\beta)$, with $i = 1, \dots, k $. Let $h \in C_c^\infty(\RR^2)$ be a non-negative function supported in $U$. We get $\int_{S^1}(h \circ f_1)\beta \neq 0$ and $\int_{S^1}(h \circ f_2)\beta = 0$, which contradicts \eqref{inj}. The assumption has been false, so the embeddings have the same image, i.e. there exists $\gamma \in \Diff(S^1)$ (assumed to be orientation preserving) such that $f_2 = f_1 \circ \gamma$. The equation \eqref{inj} becomes
\begin{equation}\label{inj2}
\int_{S^1}(h \circ f_1)(\beta - \gamma_*\beta) = 0, \quad \forall h \in C_c^\infty(\RR^2). 
\end{equation}

Assume by contradiction $\beta - \gamma_*\beta \neq 0$. Then there exists a subset of $V \subset S^1$ where $\beta - \gamma_*\beta$ has constant sign. Let $h \in C_c^\infty(\RR^2)$ be a function such that $h \circ f_1$ is supported in $V$. This leads us to $\int_{S^1}(h \circ f_1)(\beta - \gamma_*\beta) \neq 0$, which contradicts \eqref{inj2}. The second assumption has been false, so $\beta = \gamma_*\beta$, i.e. $\gamma$ is an element in the isotropy subgroup $\Diff(S^1,\beta)$. By Lemma \ref{isotropy}, this subgroup is isomorphic to $\mathbb Z_{k/\ell}$. Factoring out this cyclic group ensures the injectivity of $J$. 
\end{proof}

Each vortex loop $(C, \beta_C) \in \mathcal O_{\bar\w}$ corresponds to a unique non-smooth element in the dual: 
\begin{equation*}
\langle(C, \beta_C), h \rangle = \int_Ch\beta_C, \quad h \in \mathfrak g \cong C^\infty_c(\RR^2). 
\end{equation*} 

The natural action of $G$
\begin{equation}\label{cdj_action}
\varphi \cdot (C, \beta_C) = (\varphi(C), \varphi_*\beta_C),
\end{equation}
leaves invariant the area enclosed by the curve as well as all partial vorticities $\w_i$. We denote by $\mathcal O_{\bar\w}^a$ the subset of $\mathcal O_{\bar\w}$ consisting of vortex loops enclosing a fixed area $a := \int_C\nu$. Using Proposition \ref{id}, we obtain an identification

\begin{equation}\label{id_emba}
    \bar \psi: \Emb_a(S^1, \RR^2)/\mathbb Z_{k/\ell} \to \mathcal O_{\bar\w}^a, \quad \bar\psi([f]) = (f(S^1), f_*\beta). 
\end{equation}
The bijection \eqref{id_emba} intertwines the action of $G$ on $\Emb_a(S^1, \RR^2)/\ZZ_{k/\ell}$ with the natural action \eqref{cdj_action}.

\begin{theorem} The quotient space $(\Emb_a(S^1, \RR^2)/\mathbb Z_{k/\ell}, \bar\Omega)$ defined in this section can be realized as a coadjoint orbit of $\Ham_c(\RR^2)$. More precisely, the momentum map $\bar J$ in \eqref{momentum_map_quotient} is one-to-one onto a coadjoint orbit and the KKS symplectic form satisfies $\bar J^* \omega_{\text{KKS}} = \bar\Omega$. 
\end{theorem}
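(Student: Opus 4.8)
The plan is to apply Proposition \ref{folklore} directly to the symplectic Fr\'echet manifold $(\Emb_a(S^1, \RR^2)/\mathbb Z_{k/\ell}, \bar\Omega)$ equipped with the action of $G = \Ham_c(\RR^2)$, so the proof amounts to checking, in order, the three hypotheses of that proposition: that the $G$-action is transitive, that it is Hamiltonian, and that its momentum map is injective and equivariant.

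First I would record transitivity. The $G$-action on $(\Emb_a(S^1, \RR^2), \Omega)$ is transitive by the cited result (\cite{HV_weighted}), and it commutes with the $\mathbb Z_{k/\ell}$-action, so it descends to a well-defined action on the quotient $\Emb_a(S^1, \RR^2)/\mathbb Z_{k/\ell}$ which is still transitive (the projection $\Emb_a(S^1, \RR^2) \to \Emb_a(S^1, \RR^2)/\mathbb Z_{k/\ell}$ is surjective and $G$-equivariant). Second, the Hamiltonian nature: we have already computed in \eqref{momentum_map} that the $G$-action on $\Emb_a(S^1, \RR^2)$ is Hamiltonian with equivariant momentum map $J$, and by Lemma \ref{quotient} this passes to the quotient, giving the Hamiltonian $G$-action on $(\Emb_a(S^1, \RR^2)/\mathbb Z_{k/\ell}, \bar\Omega)$ with the descended momentum map $\bar J$ of \eqref{momentum_map_quotient}. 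Third, equivariance of $\bar J$ descends from equivariance of $J$, and injectivity of $\bar J$ is exactly the content of Lemma \ref{inj_mommap}.

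Having verified the three hypotheses, Proposition \ref{folklore} immediately yields that $\bar J$ is a bijection onto a coadjoint orbit of $G = \Ham_c(\RR^2)$ and that $\bar J^*\omega_{\text{KKS}} = \bar\Omega$, which is the assertion of the theorem. I would close by remarking that under the identification \eqref{id_emba} this coadjoint orbit is precisely $\mathcal O_{\bar\w}^a$, the manifold of vortex loops with fixed enclosed area $a$ and fixed partial vorticities $\bar\w$, since $\bar\psi$ intertwines the two $G$-actions and $\bar J$ sends $[f]$ to the distribution $h \mapsto \int_{S^1}(h\circ f)\beta = \int_{f(S^1)} h\, f_*\beta = \langle (f(S^1), f_*\beta), h\rangle$.

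The only place where any real work is needed has already been done in the preparatory lemmas: the transitivity input is quoted, and the genuine obstacle — showing that two embeddings with the same image-plus-density data differ by an element of the finite stabilizer $\Diff(S^1,\beta)\cong\mathbb Z_{k/\ell}$, which is what makes $\bar J$ injective after passing to the quotient — is precisely Lemma \ref{inj_mommap} together with Lemma \ref{isotropy}. So the proof itself is a short assembly step; the subtlety (and the reason the quotient by $\mathbb Z_{k/\ell}$ is forced) lives entirely in those lemmas.
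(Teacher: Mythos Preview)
Your proof is correct and follows exactly the paper's approach: invoke Proposition \ref{folklore} after checking transitivity (from \cite{HV_weighted}), the Hamiltonian property with equivariant momentum map (via Lemma \ref{quotient}), and injectivity of $\bar J$ (Lemma \ref{inj_mommap}). Your write-up is in fact more explicit than the paper's one-line proof, but the content is identical.
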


\begin{proof}
The proof follows from Proposition \ref{folklore}, together with the transitivity result from \cite{HV_weighted} and Lemma \ref{inj_mommap}. 
\end{proof}

\begin{corollary}\label{orbit1}
    The manifold of vortex loops $\mathcal O_{\bar\w}^a$, identified with $\Emb_a(S^1, \RR^2)/\mathbb Z_{k/\ell}$ via \eqref{id_emba}, is one-to-one onto a coadjoint orbit of $\Ham_c(\RR^2)$. 
\end{corollary}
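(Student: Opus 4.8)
The plan is to assemble the corollary directly from the machinery already built in this section, so that essentially no new argument is required. The statement is that $\mathcal O_{\bar\w}^a$, identified with $\Emb_a(S^1,\RR^2)/\ZZ_{k/\ell}$ via the bijection $\bar\psi$ in \eqref{id_emba}, maps one-to-one onto a coadjoint orbit of $\Ham_c(\RR^2)$. The key observation is that the previous Theorem has already shown that $\bar J \colon \Emb_a(S^1,\RR^2)/\ZZ_{k/\ell} \to \mathfrak g^*$ is one-to-one onto a coadjoint orbit, with $\bar J^*\omega_{\mathrm{KKS}} = \bar\Omega$. So all that remains is to transport this conclusion across the identification $\bar\psi$.

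First I would spell out why the composite $\bar J \circ \bar\psi^{-1} \colon \mathcal O_{\bar\w}^a \to \mathfrak g^*$ is precisely the tautological inclusion $(C,\beta_C) \mapsto \big(h \mapsto \int_C h\,\beta_C\big)$ of a vortex loop into the space of distributions $\mathfrak g^*$. Indeed, for $[f] \in \Emb_a(S^1,\RR^2)/\ZZ_{k/\ell}$ we have $\bar\psi([f]) = (f(S^1), f_*\beta)$, and by the change of variables formula $\int_{f(S^1)} h \,(f_*\beta) = \int_{S^1} (h\circ f)\,\beta = \langle \bar J([f]), X_h\rangle$, which is exactly \eqref{momentum_map_quotient}. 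Hence $\bar J = \iota \circ \bar\psi$, where $\iota$ denotes the tautological embedding of $\mathcal O_{\bar\w}^a$ into $\mathfrak g^*$ recorded right before \eqref{cdj_action}. Since $\bar\psi$ is a bijection (Proposition \ref{id}, in the area-constrained form \eqref{id_emba}) and $\bar J$ is injective with image a single coadjoint orbit (the Theorem), it follows that $\iota = \bar J \circ \bar\psi^{-1}$ is injective with the same image, i.e.\ $\mathcal O_{\bar\w}^a$ maps one-to-one onto that coadjoint orbit.

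I would then note, for completeness, that this identification is $G$-equivariant: the bijection \eqref{id_emba} intertwines the $G$-action on the quotient with the natural action \eqref{cdj_action}, and $\bar J$ is equivariant, so $\iota$ intertwines \eqref{cdj_action} with the coadjoint action on $\mathfrak g^*$; this is what makes the image an honest coadjoint orbit rather than merely an abstract symplectic manifold in bijection with one. If desired one can also pull back the KKS form along $\iota$: since $\bar\psi$ is the identification used to give $\mathcal O_{\bar\w}^a$ its manifold structure, the symplectic form $\bar\Omega$ corresponds under $\bar\psi$ to a form on $\mathcal O_{\bar\w}^a$, and $\iota^*\omega_{\mathrm{KKS}} = (\bar\psi^{-1})^*\bar J^*\omega_{\mathrm{KKS}} = (\bar\psi^{-1})^*\bar\Omega$ is that form.

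There is no real obstacle here; the corollary is a formal consequence of the Theorem together with Proposition \ref{id}. The only point that requires a line of care is the verification that $\bar J \circ \bar\psi^{-1}$ is the tautological inclusion, i.e.\ the change-of-variables identity $\int_{f(S^1)} h\,(f_*\beta) = \int_{S^1}(h\circ f)\,\beta$ for the pushforward of a one-form along the embedding $f$ — but this is exactly the compatibility between the definitions of the pairing $\langle (C,\beta_C), h\rangle$ and of the momentum map $J$, and it has effectively already been used implicitly when writing down \eqref{momentum_map_quotient} and \eqref{id_emba}. So the proof is a two-sentence assembly: invoke the Theorem for the quotient, and transport along the bijection $\bar\psi$.
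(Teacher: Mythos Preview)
Your proposal is correct and is precisely the assembly the paper intends: the corollary is stated without proof in the paper, as an immediate consequence of the preceding Theorem together with the identification $\bar\psi$ in \eqref{id_emba}. Your additional verification that $\bar J\circ\bar\psi^{-1}$ is the tautological inclusion of $(C,\beta_C)$ into $\mathfrak g^*$ is a welcome clarification but not strictly required.
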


\begin{remark}\label{orbit2}
    If $\beta$ has no rotational symmetry, the group $\mathbb Z_{k/\ell}$ is trivial. We obtain that the manifold of vortex loops $\mathcal O_{\bar\w}^a$ is identified with the symplectic manifold $(\Emb_a(S^1, \RR^2), \Omega)$ which is one-to-one onto a coadjoint orbit of $\Ham_c(\RR^2)$ via the momentum map $J$ in \eqref{momentum_map}. 
\end{remark}

\textbf{Funding:} I.C. was financially supported by the START Grant of the West University of Timi\c soara during the writing of this paper.

\textbf{Acknowledgments:}
I.C was supported by the START Grant of the West University of Timi\c soara during the writing of this paper.

\end{document}